\newtheorem{thm}{Theorem}
\newtheorem{lemma}{Lemma}
\newtheorem{cor}{Corollary}
\def\Tr{\mbox{Tr }}
\title{A note on extremal decompositions of covariances}
\author{Zolt\'an L\'eka}
\address{Alfr\'ed R\'enyi Institute of Mathematics \\ 1053 Budapest \\  Re\'altanoda u. 13-15 \\ Hungary }
\email{leka.zoltan@renyi.mta.hu}
\subjclass[2000]{Primary 62J10, 81Q10; Secondary 15B48, 15B57}
\keywords{decomposition, density, covariance, correlation, extreme points}
\thanks{This study was partially supported by Hungarian NSRF (OTKA) grant
no. K104206 }
\begin{document}

\begin{abstract}
    We shall present an elementary approach to extremal decompositions of (quantum) covariance matrices determined by densities. We give a new proof on former results and provide a
    sharp estimate of the ranks of the densities that appear in the decomposition theorem.
\end{abstract}

\maketitle
\section{Introduction}

 Let $D \in M_n(\mathbb{C})$ denote an $n \times n$ (complex) density matrix (i.e. $D \geq 0$ and $\Tr D = 1$), and let $X_i$ ($1 \leq i \leq k$) stand for self-adjoint matrices in $M_n(\mathbb{C}).$ Then the 
 non-commutative covariance matrix is defined by
 $$ \mbox{Var}_D(\mathbf{X})_{ij} :=  \mbox{Tr } DX_iX_j - \left(\mbox{Tr } DX_i\right) \left(\mbox{Tr } DX_j\right) \quad 1 \leq i,j \leq k,$$
 where $\mathbf{X}$ stands for the tuple $(X_1, \hdots, X_k),$ see \cite[p. 13]{P}.
 We note that there are more general versions of variances and covariance matrices. For instance, in \cite{B}, \cite{BD} R. Bhatia and C. Davis introduced them by means of completely positive maps and applied the concept for improving non-commutative
 Schwarz inequalities.
 
 Covariances naturally appear in quantum information theory as well and it seems that there is a recent interest in order to understand their extremal properties \cite{PD}, \cite{PD2}.
 More precisely, in \cite{PD} D. Petz and G. T\'oth proved that any density matrix $D$ can be written as the convex combination of projections $\{P_l\},$ i.e. $D = \sum_l \lambda_l P_l,$ such that $$ \mbox{Var}_D(X) =  \sum_l \lambda_l \mbox{Var}_{P_l}(X)$$
 holds, where $X$ denotes a fixed Hermitian. It is worth it to mention here that quite recently S. Yu pointed out some extremal aspects of the variances which yields a descriptions
 of the quantum Fisher information in terms of variances (for the details, see \cite{Y}).
 
 In this short note we study analogous questions in the multivariable case.  
 Actually, we are interested in the following problem: let us find densities $D_l \in M_n(\mathbb{C})$ such that 
   $$ D = \sum_l \lambda_lD_l  \quad \mbox{and} \quad \mbox{Var}_D(\mathbf{X}) = \sum_l \lambda_l \mbox{Var}_{D_l}(\mathbf{X}),$$   where $\sum_l \lambda_l = 1$ and $0 < \lambda_l < 1.$
 Let us call a density $D$ {\bf extreme  with respect to} $\mathbf{X} = (X_1, \hdots, X_k)$ if it admits only the trivial decomposition (i.e. $D_l = D$ for every $l$).
 It was proved in the cases $k = 1$ and $k = 2$ that the extreme densities are rank-one projections \cite{LP}, \cite{PD}. Furthermore, the number of projections used, i.e. the length of
 the decomposition, is polynomial in rank $D$ (see \cite{LP}).
 
 The aim of this note is to present a simple approach to the extremal problem above and to look at the question 
 from the theory of extreme correlation matrices (see \cite{CV},\cite{GPW} and \cite{CT}). 
 In this context we shall give a new proof to the decomposition theorems appeared in \cite{LP}, \cite{PD}, \cite{PD2} and we present a sharp rank-estimate of the extreme densities.

 \section{Results and examples}
 
 First we collect some basic properties of the covariance matrix $\mbox{Var}_D(\mathbf{X}).$ We note that the matrix does not change by (real) scalar perturbations of the tuple $(X_1, \hdots, X_k).$ In fact, an elementary calculation on the entries gives that 
   $$(1) \qquad  \mbox{Var}_D(\mathbf{X}) = \mbox{Var}_D(X_1 - \lambda_1I, \hdots, X_k - \lambda_kI),$$ where $\lambda_i \in \mathbb{R}$ for every $i.$ Moreover, one can readily check that $\mbox{Var}_D(\mathbf{X})$ is positive.
   For the sake of completeness, here is a simple proof. 
   
   \begin{lemma}
       $ {\rm Var}_D(\mathbf{X}) \geq 0. $
   \end{lemma}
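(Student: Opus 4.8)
The plan is to establish two things: that $\mbox{Var}_D(\mathbf{X})$ is a genuine Hermitian matrix, and that every one of its quadratic forms is nonnegative, the latter by identifying each quadratic form with the trace of a positive operator. For the Hermitian symmetry, I would invoke cyclicity of the trace together with the self-adjointness of $D$ and of the $X_i$: this gives $\overline{\Tr D X_iX_j} = \Tr (X_j^* X_i^* D^*) = \Tr D X_jX_i$ and $\overline{\Tr D X_i} = \Tr D X_i \in \mathbb{R}$, so the $(i,j)$ entry of $\mbox{Var}_D(\mathbf{X})$ is the complex conjugate of its $(j,i)$ entry.

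For positivity, the first step is to use the translation invariance $(1)$ to arrange that the one-point terms vanish: put $\lambda_i := \Tr D X_i$ and $Y_i := X_i - \lambda_i I$, so that $\Tr D Y_i = 0$ for every $i$; then the rank-one correction in the definition drops out and $\mbox{Var}_D(\mathbf{X})_{ij} = \mbox{Var}_D(\mathbf{Y})_{ij} = \Tr D Y_iY_j$. The second step is, for an arbitrary test vector $c = (c_1, \dots, c_k) \in \mathbb{C}^k$, to introduce $Z := \sum_j c_j Y_j$; since each $Y_i$ is self-adjoint one has $Z^* = \sum_i \overline{c_i}\, Y_i$, and hence
$$ \sum_{i,j} \overline{c_i} c_j\, \mbox{Var}_D(\mathbf{X})_{ij} \;=\; \sum_{i,j} \overline{c_i} c_j\, \Tr D Y_iY_j \;=\; \Tr D Z^* Z . $$
The final step is to observe that $\Tr D Z^* Z \geq 0$: writing $D = D^{1/2}D^{1/2}$ and using cyclicity, $\Tr D Z^* Z = \Tr (Z D^{1/2})^*(Z D^{1/2}) = \| Z D^{1/2}\|_2^2 \geq 0$ (equivalently, the trace of the product of the positive semidefinite matrices $D$ and $Z^* Z$ is nonnegative). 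As $c$ was arbitrary, this yields $\mbox{Var}_D(\mathbf{X}) \geq 0$.

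There is no genuinely hard step here; the only point that needs care is the bookkeeping of complex conjugates, since the test scalars $c_i$ are complex whereas the operators $X_i$, and hence the $Y_i$, are self-adjoint. It is precisely this that makes $\sum_i \overline{c_i} Y_i$ coincide with $Z^*$ and collapses the quadratic form to the manifestly nonnegative quantity $\Tr D Z^* Z$.
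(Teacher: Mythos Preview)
Your proof is correct and follows essentially the same route as the paper's: reduce via the translation invariance $(1)$ to the case $\Tr D X_i = 0$, and then identify the Hermitian form $\sum_{i,j}\overline{c_i}c_j\,\mathrm{Var}_D(\mathbf{X})_{ij}$ with $\Tr D Z^*Z\ge 0$ for $Z=\sum_j c_j X_j$. The paper packages the last step as the positivity of the semi--inner product $\langle A,B\rangle_D:=\Tr DA^*B$, whereas you spell it out via $\|ZD^{1/2}\|_2^2$; these are the same computation.
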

 
  \begin{proof}
    By (1), without loss of generality, one can assume that Tr $DX_i = 0$ holds for every $1 \leq i \leq k.$ The density $D$ defines a semi--inner
    product $ \langle A,B \rangle_D := \Tr DA^*B$ on $M_n(\mathbb{C}).$ Since ${\rm Var}_D(\mathbf{X})_{ij} = \langle X_i, X_j \rangle_D,$ for any $y = (y_1, \hdots, y_k) \in \mathbb{C}^k,$ we get that
     $$ y {\rm Var}_D(\mathbf{X}) y^* = \langle \: \sum_i y_i X_i,\sum_i y_i X_i \rangle_D  \geq 0$$ and the proof is done. 
  \end{proof}
 
  Next we show that the covariance is a concave function on the set of the density matrices. 
 
 \begin{lemma}  Let $D = \sum_l \lambda_l D_l$ be a finite sum of densities $D_l  \in M_n(\mathbb{C})$ such that $\sum_l \lambda_l = 1$ and $0 \leq \lambda_l \leq 1.$ Then     
     $$ {\rm Var}_D(\mathbf{X}) \geq \sum_l \lambda_l {\rm Var}_{D_l}(\mathbf{X}). $$
 \end{lemma}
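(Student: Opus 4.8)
The plan is to separate off the only part of ${\rm Var}_D(\mathbf{X})$ that fails to be affine in $D$ --- namely the products of expectations --- and to recognise the remainder as an honest (commutative) covariance matrix, to which Lemma 1 already applies.

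First I would use linearity of the trace: since $\Tr D X_iX_j = \sum_l \lambda_l \Tr D_l X_iX_j$, the $(i,j)$ entry of ${\rm Var}_D(\mathbf{X}) - \sum_l \lambda_l {\rm Var}_{D_l}(\mathbf{X})$ equals
$$
\sum_l \lambda_l \bigl(\Tr D_l X_i\bigr)\bigl(\Tr D_l X_j\bigr) - \bigl(\Tr D X_i\bigr)\bigl(\Tr D X_j\bigr).
$$
Abbreviating $\mu^{(l)}_i := \Tr D_l X_i$ --- a real number, since $X_i = X_i^*$ and $D_l \geq 0$ --- and noting that $\Tr D X_i = \sum_l \lambda_l \mu^{(l)}_i$, it suffices to show that the $k \times k$ matrix $M$ with entries
$$
M_{ij} = \sum_l \lambda_l \mu^{(l)}_i \mu^{(l)}_j - \Bigl(\sum_l \lambda_l \mu^{(l)}_i\Bigr)\Bigl(\sum_l \lambda_l \mu^{(l)}_j\Bigr)
$$
is positive.

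Now I would observe that $M$ is itself a covariance matrix of the kind in Lemma 1. Let $L$ be the number of summands, let $D' := {\rm diag}(\lambda_1, \dots, \lambda_L) \in M_L(\mathbb{C})$, which is a density, and let $Y_i := {\rm diag}\bigl(\mu^{(1)}_i, \dots, \mu^{(L)}_i\bigr)$, which is self-adjoint; put $\mathbf{Y} = (Y_1, \dots, Y_k)$. A one-line computation gives $M_{ij} = \Tr D' Y_iY_j - (\Tr D'Y_i)(\Tr D'Y_j) = {\rm Var}_{D'}(\mathbf{Y})_{ij}$, so $M = {\rm Var}_{D'}(\mathbf{Y}) \geq 0$ by Lemma 1. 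Alternatively, and without invoking Lemma 1, one can write down the identity
$$
M = \frac{1}{2}\sum_{l,m}\lambda_l\lambda_m \bigl(\mu^{(l)} - \mu^{(m)}\bigr)\bigl(\mu^{(l)} - \mu^{(m)}\bigr)^{\mathsf T},
$$
valid because $\sum_l \lambda_l = 1$, where $\mu^{(l)} = \bigl(\mu^{(l)}_1, \dots, \mu^{(l)}_k\bigr)^{\mathsf T} \in \mathbb{R}^k$; this exhibits $M$ as a non-negative combination of rank-one positive matrices.

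There is no serious obstacle here; the argument is essentially bookkeeping. The only point that needs a little care is that the traces $\Tr D_l X_i$ are real, so that the rank-one terms $\mu^{(l)}(\mu^{(l)})^{\mathsf T}$ are the correct Hermitian pieces and $M$ is genuinely the covariance matrix of the $\mathbb{R}^k$-valued random variable taking the value $\mu^{(l)}$ with probability $\lambda_l$; once this is seen, positivity of $M$, and hence the concavity inequality, is immediate.
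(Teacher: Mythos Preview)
Your argument is correct. The paper's proof is the two-term special case of your second alternative: for $D=\lambda D_1+(1-\lambda)D_2$ it computes the difference directly as $\lambda(1-\lambda)\,vv^*$ with $v_i=\Tr(D_1-D_2)X_i$, and then asserts that the general finite-sum case ``readily follows'' (implicitly by iterating concavity). Your pairwise identity
\[
M=\tfrac12\sum_{l,m}\lambda_l\lambda_m\,(\mu^{(l)}-\mu^{(m)})(\mu^{(l)}-\mu^{(m)})^{\mathsf T}
\]
reduces to exactly that expression when $L=2$, so the underlying mechanism is the same; what you add is (a) handling arbitrary $L$ in one stroke rather than via induction, and (b) the conceptual reinterpretation of the difference $M$ as a genuine (commutative) covariance, which lets you recycle Lemma~1. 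Neither addition is essential, but both make the structure of the inequality more transparent than the paper's bare two-term computation.
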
    
 
 \begin{proof}
  Choose $0 < \lambda < 1.$ If $D = \lambda D_1 + (1-\lambda)D_2,$ a straightforward calculation gives that
      $${\rm Var}_D(\mathbf{X}) - (\lambda{\rm Var}_{D_1}(\mathbf{X}) + (1-\lambda) {\rm Var}_{D_2}(\mathbf{X})) = \lambda(1-\lambda)[x_{ij}]_{1 \leq i,j \leq k},$$
     where $x_{ij} = {\rm Tr} \: (D_1-D_2)X_i {\rm Tr} \: (D_1-D_2)X_j.$ Therefore $[x_{ij}]_{1 \leq i,j \leq k} = XX^* \geq 0$ holds with
     $$ X =  \left[\begin{matrix} {\rm Tr} \: (D_1-D_2)X_1 & 0 & \hdots & 0 \cr
                                                              \vdots & \vdots &  & \vdots \cr 
                                                              {\rm Tr} \: (D_1-D_2)X_k & 0 & \hdots & 0 \end{matrix}\right] \: \in \: M_k(\mathbb{C}),$$ 
                                                              and the lemma readily follows. 
  \end{proof}

   The scalar perturbation property $\mbox{Var}_D(\mathbf{X}) = \mbox{Var}_D(\mathbf{X} - {\bf \lambda})$ guarantees that 
   it is enough to solve the extremal problem when $\mbox{Tr } DX_i = 0$ comes for every $1 \leq i \leq k.$ Then the nonlinear part of the covariance vanishes, thus we can simply transform our problem into a 
   geometrical one: let $X_i \in M_n(\mathbb{C})$ ($1 \leq i \leq k$) be self-adjoints and define the set
    \begin{eqnarray*}
     \begin{split}
      \mathcal{D}(\mathbf{X}) := \{ D \colon D \in M_n(\mathbb{C}) &\mbox{ is density and } \\
     &\mbox{Tr } DX_i = 0  \mbox{ for every } 1 \leq i \leq k \}.      
     \end{split}
    \end{eqnarray*} 
   Clearly, $\mathcal{D}(\mathbf{X})$ is a convex, compact set. From the Krein--Milman theorem, $\mathcal{D}(\mathbf{X})$ is the convex hull of its extreme points. Precisely, these extreme points are the extreme densities we are looking
   for in the decomposition of Var$_D(\mathbf{X}).$ 
   
\bigskip
 
 Notice that there is no restriction if we assume that $X_1, \hdots, X_k$ are linearly independent over $\mathbb{R}.$  
 Hence from here on we shall use this assumption on $X_i$-s.

 When $k \geq 3,$ one can see that it is no longer true that the extreme points of $\mathcal{D}(\mathbf{X})$ are rank-one projections. 
 In fact, look at the following simple example in $M_2(\mathbb{C})$ with $k = 3.$ 
 
 \bigskip
 
  \noindent {\bf Example 1.} Recall that the Pauli matrices are given by 
 $$
\sigma_x = \left[\begin{matrix}  0   & 1
\cr 1 & 0 \end{matrix}\right] \qquad  \sigma_y = \left[\begin{matrix} 0   & {\rm -i}
\cr {\rm i} & 0 \end{matrix}\right] \qquad \sigma_z = \left[\begin{matrix} 1   & 0
\cr 0 & -1 \end{matrix}\right].
$$ 
Any $2 \times 2$ Hermitian $Z$ with Tr $Z = 1$ can be expressed in the form
$$ 
Z = {1 \over 2}(I_2 + x\sigma_x + y\sigma_y + z \sigma_z),
$$ where $x,y$ and $z \in \mathbb{R}.$ Then the points of the Bloch sphere, i.e. $x^2 + y^2 + z^2 = 1,$ correspond to the rank-one projections. It is standard that the self-adjoints of trace $1,$ 
which are orthogonal to a fixed $Z,$ form an affine $2$-dimensional subspace of $\mathbb{R}^3.$
Hence one can find $X_1, X_2$ and $X_3$ so that the only density $D$ that satisfies $\mbox{Tr } DX_i = 0$ $(1 \leq i \leq 3)$ is inside the Bloch ball.
Then $\mathcal{D}(\mathbf{X})= \{ D \}$ and  $D$ is a density of rank $2.$
  
 \bigskip 
  
    We shall present a simple characterization of extreme densities or the extreme points of $\mathcal{D}(\mathbf{X}).$
    We recall that for any positive operators $D$ and $C,$ $D - \varepsilon C$ is positive for some $\varepsilon > 0$ if and
    only if $\mbox{ran } C \leq \mbox{ran } D$ holds. Then we can prove 
    
    \begin{lemma}
     The following statements are equivalent:
     \begin{itemize}
      \item [(i)]  $D$ is an extreme point of $\mathcal{D}(\mathbf{X}),$
      \item [(ii)] if $C \in \mathcal{D}(\mathbf{X})$ such that ${\rm ran} \: C \leq {\rm ran } \: D$ then $C = D.$  
     \end{itemize}
    \end{lemma}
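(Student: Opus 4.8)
The plan is to prove the two implications separately, using the standard fact (quoted just before the lemma) that for positive operators $D$ and $C$, the difference $D - \varepsilon C$ is positive for some $\varepsilon > 0$ if and only if $\mathrm{ran}\, C \leq \mathrm{ran}\, D$.

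For the direction (ii) $\Rightarrow$ (i): suppose $D$ is \emph{not} extreme in $\mathcal{D}(\mathbf{X})$. Then there is a nontrivial convex decomposition $D = \lambda D_1 + (1-\lambda) D_2$ with $0 < \lambda < 1$, $D_1, D_2 \in \mathcal{D}(\mathbf{X})$, and $D_1 \neq D_2$. Since $D_1 \geq 0$ and $\lambda D_1 \leq D$, we have $D - \lambda D_1 = (1-\lambda)D_2 \geq 0$, so by the quoted fact $\mathrm{ran}\, D_1 \leq \mathrm{ran}\, D$. As $D_1 \in \mathcal{D}(\mathbf{X})$ and $D_1 \neq D$ (otherwise $D_2 = D$ too, contradicting $D_1 \neq D_2$), statement (ii) fails. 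This contrapositive argument settles (ii) $\Rightarrow$ (i).

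For the direction (i) $\Rightarrow$ (ii): suppose (ii) fails, i.e. there is $C \in \mathcal{D}(\mathbf{X})$ with $\mathrm{ran}\, C \leq \mathrm{ran}\, D$ but $C \neq D$. By the quoted fact, $D - \varepsilon C \geq 0$ for some $\varepsilon > 0$; fix such an $\varepsilon$ with $0 < \varepsilon < 1$. Set $E := \frac{1}{1-\varepsilon}(D - \varepsilon C)$. Then $E \geq 0$ and $\mathrm{Tr}\, E = \frac{1}{1-\varepsilon}(1 - \varepsilon) = 1$, so $E$ is a density; moreover $\mathrm{Tr}\, E X_i = \frac{1}{1-\varepsilon}(\mathrm{Tr}\, D X_i - \varepsilon\, \mathrm{Tr}\, C X_i) = 0$ for every $i$, so $E \in \mathcal{D}(\mathbf{X})$. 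Now $D = \varepsilon C + (1-\varepsilon) E$ is a convex decomposition, and it is nontrivial since $C \neq D$. Hence $D$ is not an extreme point of $\mathcal{D}(\mathbf{X})$, which is the contrapositive of (i) $\Rightarrow$ (ii).

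I do not expect any serious obstacle here; the lemma is essentially a translation of the geometric notion of extreme point into the operator-theoretic language of ranges, and the only mild point of care is bookkeeping the normalization constants and the trace conditions $\mathrm{Tr}\, EX_i = 0$ to ensure the auxiliary density $E$ genuinely lies in $\mathcal{D}(\mathbf{X})$. One should also note in passing why $C \neq D$ forces the decomposition $D = \varepsilon C + (1-\varepsilon)E$ to be nontrivial (indeed $E = D$ would give $D = \varepsilon C + (1-\varepsilon)D$, hence $C = D$).
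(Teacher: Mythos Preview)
Your proof is correct and follows essentially the same route as the paper: both directions are argued by contraposition, using the quoted range criterion to pass between the convex decomposition and the range inclusion, and the auxiliary density $E = (1-\varepsilon)^{-1}(D-\varepsilon C)$ you introduce is exactly the one the paper uses. Your write-up is in fact more careful than the paper's, since you verify explicitly that $E \in \mathcal{D}(\mathbf{X})$ and note why the resulting decomposition is nontrivial.
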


    \begin{proof}
      Let us assume that ${\rm ran} \: C \leq {\rm ran } \: D$ and $D \neq C \in \mathcal{D}(\mathbf{X}).$ Then 
       $$ (1-\varepsilon) \left( {1 \over 1 - \varepsilon} (D - \varepsilon C) \right) + \varepsilon C = D,$$
       where $0 < \varepsilon < 1,$ hence $D$ cannot be an extreme point of $\mathcal{D}(\mathbf{X}).$ 
       
       Conversely, if $D$ is not extreme then $D = {1 \over 2} D_1 + {1 \over 2} D_2$ which implies that $\mbox{ran } D - {1 \over 2} D_1 \leq \mbox{ran } D,$  since  $D - {1 \over 2} D_1 $ is positive.
    \end{proof}

   To produce a description of ext $\mathcal{D}(\mathbf{X})$ which is more effective for our purposes, we need some basic facts about correlation matrices. We recall that a positive semidefinite matrix is a correlation matrix
   if its diagonal entries are $1$-s. Correlation matrices form a convex, compact set in $M_n(\mathbb{C}).$ Its extreme points, or extreme correlation
   matrices, were described by several authors, see e.g. \cite{GPW}, \cite{CT}. It is well-known that an $n \times n$ extreme correlation matrix has rank at most $\sqrt{n}$ (see e.g. \cite{CV}).
   Later we shall present an estimate of the rank of extreme densities matrices (with respect to tuples).
   
   The perturbation method used by C.-K. Li and B.-S. Tam is relevant for us. Let us say that 
   a nonzero Hermitian $S \in M_n(\mathbb{C})$ is a {\bf perturbation} of $D$ if there exists an  $\varepsilon > 0$ such that $D \pm \varepsilon S$ are density matrices as well. Then $D$ is an extreme 
   density with respect to $X_1, \hdots, X_k$ if and only if there does not exist perturbation $S$ of $D$ such that Tr $S = 0$ and Tr $SX_i = 0$ for every 
   $1 \leq i \leq k.$ In fact, if $D$ is not extreme, one can find $D_1$ and $D_2$ densities such that $D = {1 \over 2} D_1 + {1\over 2} D_2$ and $\Tr D_jX_i = 0.$ It follows that $S = D_1 - D_2$ is a perturbation of $D.$ The converse statement is trivial. 
   
   From here on let $H_n(\mathbb{C})$ denote the real Hilbert space of $n \times n$ complex Hermitian matrices with the usual 
    inner product $\langle A, B \rangle = \Tr AB.$
   One can easily conclude that an extreme density $D$ (with respect to $\mathbf{X}$) must be singular if $n^2 > k+1.$ Actually, 
   the last inequality guarantees the existence of a Hermitian perturbation $S$ which satisfies the orthogonality constraints; i.e. $S$ is orthogonal to $X_i$-s and $I.$ Moreover, the continuity of the spectra here gives that any small perturbation $D \pm \varepsilon S$ is positive if $D$ is invertible. 
   
     Let $\sigma(A)$ denote the spectrum of any $A \in M_n(\mathbb{C}).$
   Suppose that the matrix $D$ is of rank $r$. Then there does exist an $Y \in M_{n \times r}(\mathbb{C})$ and $R \in H_r(\mathbb{C})$ such that
   $D = YRY^*.$ Now one can prove the following lemma which is analogous to \cite[Theorem 1. (a)]{CT}.
   
      \begin{lemma}
      Let $D = YRY^* \in  \mathcal{D}(\mathbf{X})$ be a density of rank $r.$ 
      Then  $S$ is a perturbation of $D$ if and only if ${\rm Tr} \: S = 0$ and $S = YQY^*$ where $Q \in H_r(\mathbb{C}).$ 
   \end{lemma}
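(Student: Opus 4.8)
The plan is to establish both directions using the structure of the range of $D$ together with the perturbation characterization stated just above. Write $D = YRY^*$ with $Y \in M_{n\times r}(\mathbb{C})$ of full column rank $r$ and $R \in H_r(\mathbb{C})$ invertible; note that $\mathrm{ran}\, D = \mathrm{ran}\, Y$.

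For the ``if'' direction, suppose $S = YQY^*$ with $Q \in H_r(\mathbb{C})$ and $\mathrm{Tr}\, S = 0$. Since $R$ is invertible, for $\varepsilon > 0$ small enough $R \pm \varepsilon Q \geq 0$ in $H_r(\mathbb{C})$ by continuity of the spectrum, hence $D \pm \varepsilon S = Y(R \pm \varepsilon Q)Y^* \geq 0$. Because $\mathrm{Tr}\,(D \pm \varepsilon S) = 1 \pm \varepsilon \,\mathrm{Tr}\, S = 1$, the matrices $D \pm \varepsilon S$ are densities, so $S$ is a perturbation of $D$ (and $S \neq 0$ since $Q$ is assumed to make $S$ a genuine, i.e. nonzero, perturbation).

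For the ``only if'' direction, let $S$ be a perturbation of $D$, so $D \pm \varepsilon S$ are densities for some $\varepsilon > 0$. Taking traces immediately gives $\mathrm{Tr}\, S = 0$. The key point is that positivity of $D + \varepsilon S$ and $D - \varepsilon S$ forces $\mathrm{ran}\, S \leq \mathrm{ran}\, D = \mathrm{ran}\, Y$: writing any vector $v$ with $Dv = 0$, from $\langle (D \pm \varepsilon S)v, v\rangle \geq 0$ and $\langle Dv, v\rangle = 0$ one gets $\pm \varepsilon \langle Sv, v\rangle \geq 0$, hence $\langle Sv, v\rangle = 0$ for all $v \in \ker D$; since $S$ is positive on $(D+\varepsilon S \geq 0$ restricted to$)$ ... more cleanly, $\ker D \subseteq \ker(D+\varepsilon S) \cap \ker(D-\varepsilon S)$ would follow from the standard fact that for $A \geq 0$, $\langle Av,v\rangle = 0$ implies $Av = 0$; applying this to $A = D+\varepsilon S$ and $A = D - \varepsilon S$ on $\ker D$ yields $(D \pm \varepsilon S)v = 0$, so $Sv = 0$. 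Thus $\ker D \subseteq \ker S$, equivalently $\mathrm{ran}\, S \leq \mathrm{ran}\, D$. Consequently $S = P_{\mathrm{ran}\, Y}\, S\, P_{\mathrm{ran}\, Y}$, and since $Y$ has full column rank, $P_{\mathrm{ran}\, Y} = Y(Y^*Y)^{-1}Y^*$; setting $Q = (Y^*Y)^{-1}Y^* S\, Y (Y^*Y)^{-1} \in H_r(\mathbb{C})$ gives $S = YQY^*$, completing the proof.

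The main obstacle is the range inclusion step: one must argue carefully that \emph{both} $D + \varepsilon S$ and $D - \varepsilon S$ being positive (not merely $D \pm \varepsilon S$ for one sign) is what pins $\ker S \supseteq \ker D$, using the elementary lemma that a positive semidefinite matrix annihilates every vector on which its quadratic form vanishes. Once that is in hand, the factorization $S = YQY^*$ is a routine consequence of $Y$ having full column rank, and the reverse direction is a short spectral-continuity argument. It may be worth remarking that this is the exact analogue of \cite[Theorem 1.(a)]{CT} for correlation matrices, with the diagonal-one constraint there replaced by the trace-zero constraint here.
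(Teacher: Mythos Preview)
Your proof is correct. The ``if'' direction matches the paper's argument essentially verbatim: invertibility of $R$ plus spectral continuity makes $R\pm\varepsilon Q\geq 0$ for small $\varepsilon$, and the trace condition is immediate.

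For the ``only if'' direction you take a slightly different route from the paper. The paper completes $Y$ to an invertible $V=(Y\,|\,Z)$ with $D=V(R\oplus 0_{n-r})V^*$, writes $V^{-1}S(V^{-1})^*$ in the corresponding $2\times 2$ block form, and reads off from the positivity of $V^{-1}(D\pm\varepsilon S)(V^{-1})^*$ that the off-diagonal and lower-right blocks vanish, i.e.\ $S=V(Q\oplus 0_{n-r})V^*=YQY^*$. You instead argue directly that $\ker D\subseteq\ker S$ (via $\langle(D\pm\varepsilon S)v,v\rangle\geq 0$ on $\ker D$ and the fact that a positive semidefinite matrix kills any vector on which its quadratic form vanishes), deduce $\mathrm{ran}\,S\subseteq\mathrm{ran}\,Y$, and then factor $S$ through the orthogonal projection $P_{\mathrm{ran}\,Y}=Y(Y^*Y)^{-1}Y^*$ to obtain $Q=(Y^*Y)^{-1}Y^*SY(Y^*Y)^{-1}$. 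Both arguments encode the same phenomenon---the perturbation is supported on $\mathrm{ran}\,D$---but yours avoids the auxiliary completion $V$ and gives an explicit formula for $Q$, while the paper's block computation is closer in spirit to the Li--Tam correlation-matrix argument it is modelled on. One cosmetic point: your mid-sentence self-correction (``\ldots more cleanly, \ldots'') should be tidied up in a final version, and you may want to state explicitly that $Q\neq 0\iff S\neq 0$ (because $Y$ has full column rank) so that the nonzero requirement in the definition of a perturbation is visibly accounted for.
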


   \begin{proof}
      First, assume that $S = YQY^*.$ Then  $S$ is nonzero if and only if $Q \neq 0.$ Indeed, we have $\mbox{rank } S = \mbox{rank } Q$ because $Y$ has full column rank $r.$ 
      Since $D = YRY^*$ is positive, we obtain that $R$ is positive and invertible. From $0 \notin \sigma(R),$ there does exist an $\varepsilon > 0,$ such that  $D \pm \varepsilon S = Y(R\pm \varepsilon Q)Y^* $ are positive. 
      Obviously, we get that $S$ is a perturbation. 
      
      Conversely, let us assume that $S$ is perturbation of $D.$ Clearly, Tr $S = 0$ must hold. Expand $Y$ with a matrix $Z \in M_{n \times (n-r)}(\mathbb{C})$ such that 
      $V = (Y|Z)$ is invertible and $V(R \oplus 0_{n-r})V^* = D$ hold.  Next, let us write $V^{-1}S(V^*)^{-1}$ into blocks that corresponds to the block form of $R \oplus 0_{n-r}.$
      Since $V^{-1}(D \pm \varepsilon S)(V^{-1})^*$ are positive for some $\varepsilon > 0,$ it follows that $S = V(Q \oplus 0_{n-r})V^* $ must hold for some $ Q \in H_r(\mathbb{C}).$ \\
    \end{proof} 
  
   After this lemma here is our main result which reflects some similarity with the characterization theorem of extreme correlations, see \cite[Theorem 1]{CT}.  
    
     \begin{thm}
       Let $X_i \in H_n(\mathbb{C}),$ $1 \leq i \leq k,$ and $D = YRY^* \in  \mathcal{D}(\mathbf{X})$ be a density of rank $r,$
       where $Y \in M_{n \times r}(\mathbb{C}).$ The followings are equivalent:
       \begin{itemize}
        \item[(i)] $D$ is an extreme point of $\mathcal{D}(\mathbf{X}),$
        \item[(ii)] $ {\rm span} \:  \{ Y^*X_1Y, \hdots, Y^*X_kY, Y^*Y  \} = H_r(\mathbb{C}),$      
        \item[(iii)] $  \{ DX_1D, \hdots, DX_kD, D^2  \} \mbox{ has (real) rank } r^2.$
       \end{itemize}
       Moreover, if $D = YY^*$ then the above statements are equivalent to   
       \begin{itemize}
        \item[(iv)] $r^{-1}I_r$ is an extreme density with respect to $Y^*\mathbf{X}Y;$  
         that is, $$\mathcal{D}(Y^*\mathbf{X}Y) = \{r^{-1} I_r\}.$$
       \end{itemize}
      \end{thm}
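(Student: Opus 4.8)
The plan is to reduce everything to the perturbation characterization already established. By the discussion preceding the theorem, $D$ is an extreme point of $\mathcal{D}(\mathbf{X})$ if and only if there is no perturbation $S$ of $D$ with $\Tr S = 0$ and $\Tr SX_i = 0$ for all $i$. By the last lemma, the perturbations of $D = YRY^*$ are exactly the matrices $S = YQY^*$ with $Q \in H_r(\mathbb{C})$ and $\Tr S = 0$; note that $\Tr S = \Tr QY^*Y = \langle Q, Y^*Y\rangle$ and $\Tr SX_i = \Tr YQY^*X_i = \langle Q, Y^*X_iY\rangle$, where the inner product is the one on $H_r(\mathbb{C})$. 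So $D$ fails to be extreme precisely when there is a nonzero $Q \in H_r(\mathbb{C})$ orthogonal to all of $Y^*X_1Y, \dots, Y^*X_kY$ and to $Y^*Y$. The existence of such a $Q$ is equivalent to $\mathrm{span}\{Y^*X_1Y, \dots, Y^*X_kY, Y^*Y\}$ being a proper subspace of $H_r(\mathbb{C})$. Negating, (i) $\Leftrightarrow$ (ii).

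For (ii) $\Leftrightarrow$ (iii): since $Y$ has full column rank $r$, the map $A \mapsto YAY^*$ from $H_r(\mathbb{C})$ to $H_n(\mathbb{C})$ is injective and $\mathbb{R}$-linear, hence preserves the dimension of any real span. Applying it to the spanning set in (ii), one gets $\mathrm{span}\{YY^*X_1YY^*, \dots, YY^*X_kYY^*, (YY^*)^2\}$. When $D = YY^*$ this is exactly the set in (iii). In general $D = YRY^*$ with $R$ positive invertible, and one checks that conjugation by the invertible $R$ (equivalently, an absorbing change of the factor $Y$) does not change the real rank of the family; so (ii) and (iii) say the same thing. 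It is cleanest to treat the case $R = I_r$ first and then remark that replacing $Y$ by $YR^{1/2}$ turns $D = YRY^*$ into a factorization with $R = I_r$ and carries one spanning set linear-isomorphically onto the other.

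For the final equivalence with (iv), assume $D = YY^*$. Then $Y^*\mathbf{X}Y = (Y^*X_1Y, \dots, Y^*X_kY)$ is a tuple of Hermitians in $H_r(\mathbb{C})$, and $r^{-1}I_r$ lies in $\mathcal{D}(Y^*\mathbf{X}Y)$ iff $\Tr (r^{-1}I_r)(Y^*X_iY) = 0$ for all $i$, i.e. $\Tr DX_i = 0$, which holds since $D \in \mathcal{D}(\mathbf{X})$. Now apply the already-proven equivalence (i) $\Leftrightarrow$ (ii) to the density $r^{-1}I_r = (r^{-1/2}I_r)(r^{-1/2}I_r)^*$ with respect to the tuple $Y^*\mathbf{X}Y$: here the role of $Y$ is played by $r^{-1/2}I_r$, so the span condition (ii) becomes $\mathrm{span}\{Y^*X_1Y, \dots, Y^*X_kY, I_r\} = H_r(\mathbb{C})$. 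But $Y^*Y = I_r$ in this case, so this is literally condition (ii) for $D$. Hence $r^{-1}I_r$ is extreme with respect to $Y^*\mathbf{X}Y$ iff (ii) holds for $D$; and one should also note that $\mathcal{D}(Y^*\mathbf{X}Y)$ being the single point $\{r^{-1}I_r\}$ is the same as $r^{-1}I_r$ being extreme there, because that set always contains $r^{-1}I_r$ and, being convex, collapses to a point exactly when that point is extreme with no room to perturb.

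I expect the main obstacle to be the bookkeeping in (ii) $\Leftrightarrow$ (iii) for general $R$: one must make sure that absorbing $R^{1/2}$ into $Y$ is legitimate (it is, since $R$ is positive and invertible by the argument in the previous lemma) and that this neither changes $\mathcal{D}(\mathbf{X})$-membership nor the real rank in (iii). Everything else is a direct translation through the linear isomorphism $A \mapsto YAY^*$ and the perturbation lemma, with no genuine computation beyond writing traces as inner products on $H_r(\mathbb{C})$.
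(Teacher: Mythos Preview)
Your arguments for (i) $\Leftrightarrow$ (ii) and (ii) $\Leftrightarrow$ (iii) are correct and follow the paper's route: translate the perturbation characterization through $S=YQY^*$ into orthogonality constraints in $H_r(\mathbb{C})$, and then use that $A\mapsto YAY^*$ is injective (equivalently, that $Y^*Y$ is invertible) to identify the ranks of the two families.

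The problem is in your treatment of (iv). You assert that ``$Y^*Y = I_r$ in this case,'' and use this to identify the span condition for $r^{-1}I_r$ with condition (ii) for $D$. That assertion is false: from $D=YY^*$ with $Y\in M_{n\times r}(\mathbb{C})$ of full column rank one only gets that $Y^*Y$ is positive definite; it equals $I_r$ exactly when $D$ is an orthogonal projection, which a general rank-$r$ density is not. So the condition you correctly derived for $r^{-1}I_r$ to be extreme with respect to $Y^*\mathbf{X}Y$, namely
\[
\mathrm{span}\{Y^*X_1Y,\dots,Y^*X_kY,I_r\}=H_r(\mathbb{C}),
\]
is not \emph{literally} condition (ii) for $D$, which reads
\[
\mathrm{span}\{Y^*X_1Y,\dots,Y^*X_kY,Y^*Y\}=H_r(\mathbb{C}).
\]

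These two span conditions are nonetheless equivalent, and that is the missing step. Since $D\in\mathcal{D}(\mathbf{X})$ and $D=YY^*$, each $Y^*X_iY$ satisfies $\Tr(Y^*X_iY)=\Tr(DX_i)=0$, while $\Tr(Y^*Y)=\Tr D=1$ and $\Tr I_r=r$ are nonzero. Hence neither $Y^*Y$ nor $I_r$ lies in $\mathrm{span}\{Y^*X_iY\}$, and adjoining either one increases the dimension of that span by exactly one; the two spans therefore have the same dimension and are all of $H_r(\mathbb{C})$ simultaneously. The paper makes precisely this observation in its (iv) $\Rightarrow$ (i) step. With this correction in place your proof goes through and coincides with the paper's argument.
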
  
     \begin{proof} 
      \noindent (i) $\Leftrightarrow$ (ii) From Lemma 4, $D$ is extreme if and only if there does not exist $0 \neq YQY^*$ such that Tr $YQY^*Y_i = \Tr Q(Y^*X_iY) = 0$ and Tr $YQY^* = \Tr  Q(Y^*Y) = 0.$ We notice that $Q = 0$ if and only
      if the linear span of $Y^*X_1Y, \hdots,$ $Y^*X_kY$ and $Y^*Y$ is the full space $H_r(\mathbb{C}).$ \\
     \noindent (iii) $\Leftrightarrow$  (ii) Let us choose the decomposition $D = YY^*;$ that is, $R = I_r.$ Note that the self-adjoint $Y^*Y \in M_r(\mathbb{C})$ is invertible. In fact, $\sigma(YY^*) \cup \{0\} = \sigma(Y^*Y) \cup \{0\}$ holds, thus
      $\sigma(Y^*Y)$ equals to the set of positive eigenvalues of $D$ (with multiplicities). This implies that 
      $\sum_{i = 0}^k  \alpha_i Y^*X_iY = 0$ if and only if $\sum_{i = 0}^k \alpha_i  YY^*X_iYY^* = 0$ $(\alpha_i \in \mathbb{R}, \: X_{0} = I_n),$ so the systems $\{ Y^*X_1Y, \hdots, Y^*X_kY, Y^*Y  \}$ and $\{ DX_1D, \\ \hdots, DX_kD, D^2  \}$ have the same rank. \\
      \noindent (i) $\Rightarrow$  (iv) Since $D$ is an extreme point, we get from (ii) that $\{Y^*X_1Y, \\ \ldots, Y^*X_kY\}$ has rank at least $r^2-1.$
      However, $I_r$ is not in the linear span of the above system because it is orthogonal to every matrix $Y^*X_iY.$ Adjusting $r^{-1}I_r$ to $Y^*\mathbf{X}Y$, we get a full rank system
      of $H_r(\mathbb{C}).$ Hence by (iii) we conclude that $r^{-1}I_r$ is an extreme point of $\mathcal{D}(Y^*\mathbf{X}Y).$ \\
      \noindent (iv) $\Rightarrow$  (i) If $r^{-1}I_r$ is an extreme point, it has no perturbation $S$ which is orthogonal to every $Y^*X_iY.$ Thus it follows that
      $I_r, Y^*X_1Y, \ldots, Y^*X_kY$ must span $H_r(\mathbb{C});$ that is, $\mathcal{D}(Y^*\mathbf{X}Y) = \{r^{-1}I_r\}.$ Note that $Y^*Y, Y^*X_1Y, \\ \ldots,$ $Y^*X_kY$ span $H_r(\mathbb{C})$ as well becase $\Tr Y^*Y = \Tr D = 1$ and $Y^*X_iY$-s are traceless. Thus
      (ii) implies that $D$ is an extreme point. 
     \end{proof} 
    
   The theorem gives a straightforward estimate of the rank of extreme densities. 
   
   \begin{cor}
    Let $D \in M_n(\mathbb{C})$ be an extreme density with respect to $X_1, \hdots, X_k \in H_n(\mathbb{C}).$ 
    Then 
     $$ {\rm rank} \: D \leq \sqrt{k+1}. $$
   \end{cor}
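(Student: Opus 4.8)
The plan is to read the rank bound straight off the characterization in the Theorem, so the work has essentially already been done. Let $r = {\rm rank}\, D$ and write $D = YY^*$ with $Y \in M_{n \times r}(\mathbb{C})$ of full column rank (the particular factorization $R = I_r$ is the convenient one, as in the proof of (iii) $\Leftrightarrow$ (ii)). Since $D$ is an extreme density with respect to $\mathbf{X}$, condition (ii) of the Theorem applies and tells us that
$$ {\rm span}_{\mathbb{R}} \: \{ Y^*X_1Y, \hdots, Y^*X_kY, Y^*Y \} = H_r(\mathbb{C}). $$

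Now I would simply count dimensions. The real vector space $H_r(\mathbb{C})$ of $r \times r$ complex Hermitian matrices has dimension $r^2$ (namely $r$ real diagonal entries plus $2 \binom{r}{2} = r^2 - r$ real parameters for the strictly upper-triangular part). On the other hand, the spanning set displayed above consists of $k+1$ vectors. A spanning set of a real vector space must have cardinality at least its dimension, so $k + 1 \geq r^2$, and therefore ${\rm rank}\, D = r \leq \sqrt{k+1}$, which is the claim.

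There is no real obstacle here: the only ingredient beyond the Theorem is the elementary fact $\dim_{\mathbb{R}} H_r(\mathbb{C}) = r^2$, and everything else is the trivial inequality between the size of a spanning set and the dimension. The one point worth a remark — though it is not needed for the Corollary itself — is that this estimate is \emph{sharp}: Example 1 already exhibits, for $n = 2$ and $k = 3$, an extreme density of rank $2 = \sqrt{k+1}$, so the bound $\sqrt{k+1}$ cannot be improved in general.
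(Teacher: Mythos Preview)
Your argument is correct and is exactly the intended one: the paper presents this Corollary without a written proof precisely because it is the immediate dimension count from condition (ii) of the Theorem, namely $k+1 \geq \dim_{\mathbb{R}} H_r(\mathbb{C}) = r^2$. Your remark on sharpness is also consistent with the paper, which establishes the same point (in greater generality) in Example~2.
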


  The Krein--Milman theorem implies that  $\mbox{Var}_D(\mathbf{X})$ can be written as the convex sum of covariances determined by
  densities of rank at most $\sqrt{k+1}.$
  Moreover, one can easily deduce the following result which first appeared in \cite{LP}, \cite{PD2} and \cite[Theorem]{PD}.
   
   \begin{cor}
    Let $D \in M_n(\mathbb{C})$ denote a density matrix. In the case of $k = 1$ and $k = 2,$ there exist projections $P_1, \hdots, P_m$ such that 
         $$D = \sum_{l=1}^m \lambda_l P_l  \quad \mbox{and} \quad {\rm Var}_D(\mathbf{X}) = \sum_{l=1}^m \lambda_l {\rm Var}_{P_l}(\mathbf{X})$$ hold, 
        where $\sum_{l=1}^m \lambda_l = 1$ and $0 \leq \lambda_l \leq 1.$
   \end{cor}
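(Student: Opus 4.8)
The plan is to deduce this directly from Corollary 1 together with the concavity Lemma 2. First I would invoke the scalar perturbation identity (1): for a fixed density, replacing each $X_i$ by $X_i - (\Tr D X_i)I$ alters none of the covariance matrices ${\rm Var}_D(\mathbf{X})$ or ${\rm Var}_{P_l}(\mathbf{X})$ that occur in the statement. Hence there is no loss in assuming $\Tr D X_i = 0$ for every $1 \leq i \leq k$, so that $D \in \mathcal{D}(\mathbf{X})$; at the end one simply undoes this shift, again using (1).

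Next, since $\mathcal{D}(\mathbf{X})$ is a convex compact subset of the finite-dimensional space $H_n(\mathbb{C})$, the Minkowski--Carath\'eodory theorem (the finite form of Krein--Milman) lets me write $D = \sum_{l=1}^m \lambda_l D_l$ as a finite convex combination of extreme points $D_l$ of $\mathcal{D}(\mathbf{X})$, with $0 \leq \lambda_l \leq 1$ and $\sum_l \lambda_l = 1$. By Corollary 1 each $D_l$ satisfies ${\rm rank} \, D_l \leq \sqrt{k+1}$. When $k = 1$ or $k = 2$ this forces ${\rm rank} \, D_l = 1$, since $\sqrt{k+1} < 2$ and rank $0$ is excluded by $\Tr D_l = 1$; thus each $D_l$ is a positive trace-one matrix of rank one, i.e. a rank-one orthogonal projection $P_l$.

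It then remains to verify that the covariance decomposes additively for this particular convex combination. Because each $D_l$ lies in $\mathcal{D}(\mathbf{X})$ we have $\Tr D_l X_i = 0$, and by our normalization also $\Tr D X_i = 0$; hence the nonlinear terms in the covariance all vanish and
$$ {\rm Var}_D(\mathbf{X})_{ij} = \Tr D X_i X_j = \sum_{l=1}^m \lambda_l \, \Tr D_l X_i X_j = \sum_{l=1}^m \lambda_l \, {\rm Var}_{D_l}(\mathbf{X})_{ij}. $$
Equivalently, this is the equality case of Lemma 2, whose defect term $x_{ij}$ vanishes since every $\Tr (D_r - D_s)X_i = 0$. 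Reverting the scalar shift via (1) leaves all the covariance matrices unchanged, which finishes the argument.

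The proof is essentially bookkeeping once Corollary 1 is available; the only points deserving a word of care are that the Krein--Milman decomposition can be taken \emph{finite} (whence the appeal to Carath\'eodory in finite dimensions) and that the covariance identity comes out as an equality rather than merely the inequality of Lemma 2 --- this is exactly because all the extreme components sit inside $\mathcal{D}(\mathbf{X})$. I do not expect any genuine obstacle beyond these observations.
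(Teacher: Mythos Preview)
Your argument is correct and is precisely the route the paper intends: the paper does not spell out a proof of this corollary but simply notes that it follows from Corollary~1 via Krein--Milman, and your write-up supplies exactly those details (the reduction to $\mathcal{D}(\mathbf{X})$ via (1), the finite Carath\'eodory decomposition into extreme points, the rank bound forcing rank one when $k\le 2$, and the observation that the covariance identity is automatic once all $D_l$ lie in $\mathcal{D}(\mathbf{X})$).
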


   In the case of $k \geq 3$, one might expect that the covariance matrix still can  be decomposed by means of projections if $n$ is large enough. 
   However, this is not necessarily true. The next example shows that the estimate of Corollary 1 is sharp if $n$ is large enough.
   
   \bigskip
   
   \noindent {\bf Example 2.} Let $n = \lfloor \sqrt{k+1} \rfloor.$ The special unitary group $SU(n)$ has dimension $n^2-1,$ so let $\lambda_i$ $(1 \leq i \leq n^2-1)$ denote a collection of its traceless, Hermitian infinitesimal generators. 
   One can also assume that $\Tr \lambda_i \lambda_j = 0$ holds for every $i \neq j$ (for the generalized Gell--Mann matrices, see e.g. \cite{SZ}). Then the matrices $\{I_n, \lambda_1, \hdots,  \lambda_{n^2-1}\}$ span
   the real vector space $H_n(\mathbb{C}).$ Thus it follows that 
    $$ \mathcal{D}(\lambda_1, \hdots,  \lambda_{n^2-1}) = \left\{{I_n \over n} \right\}$$ is a singleton, hence 
    $(1/n) I_n$ is an extreme density of rank $n.$ If $n^2 < k + 1,$ let us choose arbitrary $\lambda_{n^2}, \hdots, \lambda_k \in M_m(\mathbb{C})$ Hermitians 
 which are linearly independent where $m$ is large enough. 
 From Theorem 1 (iii), $(1/n)I_n \oplus 0_m$ remains extremal with respect 
 to $\lambda = (\lambda_1 \oplus 0_m, \hdots, \lambda_{n^2-1} \oplus 0_m, 0_n \oplus \lambda_{n^2}, \hdots, 0_n \oplus \lambda_k),$ hence $\mbox{Var}_{(1/n)I_n \oplus 0_m}({\bf \lambda})$ is not decomposable.
    
 Applying direct sums as above, for every large $n$ one can construct $n \times n$ extreme densities of arbitrary rank between $1$ and $\sqrt{k+1}.$  
    
   \bigskip
    
  The method we used is very similar to that of describing extreme correlations. However, the next example shows that $\mbox{Var}_D(\mathbf{X})$ is not necessarily extreme even if it is a correlation matrix and $D$ is an extreme density (with respect to some tuple).
  
  \bigskip
  
  \noindent {\bf Example 3.} Let $D$ be the projection $\mbox{diag}(1,0,\hdots, 0) \in \mathbb{R}^{n+1}.$ We define the Hermitians in $H_{n+1}(\mathbb{C})$
  $$ X_1 := \left[\begin{matrix} 
          0 & 1 \cr
          1 & 0 
      \end{matrix} \right] \oplus 0_{n-1}, \;  X_2 := \left[\begin{matrix} 
          0 & 0 & 1 \cr
          0 & 0 & 0 \cr
          1 & 0 & 0 
      \end{matrix} \right] \oplus 0_{n-2}, \; \hdots \; , $$ $$   X_n := \left[\begin{matrix} 
          0 & \hdots & 0 & 1 \cr
          \vdots & \vdots & \vdots & 0 \cr
          0 & \vdots  & \vdots  & \vdots \cr
          1 & 0 & \hdots & 0
      \end{matrix} \right].
     $$
  Then a simple calculation gives that $\mbox{Var}_D(\mathbf{X}) = I_n$ which is obviously not an extreme correlation matrix.
  
  \bigskip

  Finally, for the converse, we give an example that $\mbox{Var}_D(\mathbf{X})$ can be an extreme correlation matrix while $D$ is not necessarily extremal (with respect to $\mathbf{X}$).
  
  \bigskip
  
  \noindent {\bf Example 4.} Consider $D = (1/n) I_n \oplus 0_n \in H_{2n}(\mathbb{C}),$ $n > 2.$ Let us choose reals $x_1, \hdots, x_n$ such that $\sum_{i=1}^n x_i = 0$ and 
   $\sum_{i=1}^n nx_i^2 = 1$  hold. For any $\tilde{X}_i \in H_n(\mathbb{C}),$ $1 \leq i \leq n,$ we set 
      $$ X_i = \mbox{diag} (x_1, \hdots, x_n) \oplus \tilde{X}_i \in H_{2n}(\mathbb{C}) \qquad 1 \leq i \leq n. $$
   Then we get that $\mbox{Var}_D(\mathbf{X})$ is the $n \times n$ matrix which consists only $1$-s; that is, it is a rank-one extreme correlation matrix. From Corollary 1, $D$ cannot be extreme
   with respect to $\mathbf{X}.$

\end{document}